\numberwithin{equation}{section}
\newcommand{\reftab}{\mathbb{T}}
\def\N{\mathbb{N}}
\def\C{\mathbb{C}}
\def\sym#1{\mathfrak{S}_{#1}}
\def\card#1{\left|#1\right|}
\def\kakko#1{\left(#1\right)}
\def\ckakko#1{\left\{#1\right\}}
\def\ceil#1{\left\lceil#1\right\rceil}
\newcommand{\inprod}[3][{}]{\left\langle#2,\,#3\right\rangle_{\!#1}}
\def\st{\operatorname{Sym}}
\def\len#1{\ell(#1)}
\def\deq{\overset{\text{def}}{=}}
\def\triv#1{\boldsymbol{1}_{#1}}
\def\bc{\overline{c}}
\def\phi{\varphi}
\newcommand{\tramat}[2][n,l]
{\boldsymbol{F}^{#2}_{\tmspace -\thinmuskip{.3em}#1}}
\newcommand{\gcp}[3][\alpha]{F^{#2}_{#3}(#1)}
\newcommand{\cp}[1]{f_{#1}}
\def\liegl{\mathfrak{gl}}
\newcommand{\U}[1][n]{\mathcal{U}(\liegl_{#1})}
\def\ys#1{c_{\lower0.3ex\hbox{$\scriptstyle#1$}}}
\def\bys#1{\bc_{\lower0.3ex\hbox{$\scriptstyle#1$}}}
\DeclareMathOperator{\tr}{tr}
\DeclareMathOperator{\ind}{ind}
\DeclareMathOperator{\End}{End}
\DeclareMathOperator{\Mat}{Mat}
\DeclareMathOperator{\sgn}{sgn}
\DeclareMathOperator{\rank}{rk}
\DeclareMathOperator{\per}{per}
\DeclareMathOperator{\fix}{fix}
\newcommand{\adet}[1][\alpha]%
{\operatorname{det}^{(#1)}}
\newcommand{\cw}[1]{\nu(#1)} 
\newcommand{\anu}[1]{\alpha^{\cw{#1}}}
\newcommand{\ve}{\boldsymbol{e}}
\newcommand{\A}[1][n]{\mathcal{P}(\Mat_{#1})}
\newcommand{\GLmod}[2]{\mathcal{M}_{#1}^{#2}}
\newcommand{\Smod}[1]{\mathcal{S}^{#1}}
\newcommand{\Cmod}[2]{\boldsymbol{V}_{\!\!#1,#2}}
\newcommand{\sfphi}[2][K]{\phi^\sharp_{\!\lower0.4ex\hbox{$\scriptstyle#1,#2$}}}
\newcommand{\intertwiner}{\mathcal{T}}
\newcommand{\tHGF}[5]
{{}_{#1}\tilde{F}_{#2}\kakko{\genfrac{}{}{0pt}{}{#3}{#4};#5}}
\def\phs#1#2{(#1)_{#2}}
\newtheorem{thm}{Theorem}[section]
\newtheorem{lem}[thm]{Lemma}
\theoremstyle{definition}
\newtheorem{prob}[thm]{Problem}
\newtheorem{ex}[thm]{Example}
\theoremstyle{remark}
\newtheorem{rem}[thm]{Remark}
\newenvironment{keywords}{\smallskip\noindent{\bfseries Keywords:}}{}
\newenvironment{MSC}{\smallskip\noindent{\bfseries 2000 Mathematical Subject Classification:}}{}
\begin{document}
\title{\bfseries Representation theory of the $\alpha$-determinant\\
and zonal spherical functions}
\author{Kazufumi KIMOTO}
\date{December 14, 2007}
\maketitle

\begin{abstract}
We prove that the multiplicity of each irreducible component
in the $\mathcal{U}(\mathfrak{gl}_n)$-cyclic module
generated by the $l$-th power $\det^{(\alpha)}(X)^l$
of the $\alpha$-determinant
is given by the rank of a matrix whose entries are
given by a variation of
the spherical Fourier transformation for $(\mathfrak{S}_{nl},\mathfrak{S}_l^n)$.
Further, we calculate the matrix explicitly when $n=2$.
This gives not only another proof of the result by Kimoto-Matsumoto-Wakayama (2007)
but also a new aspect of the representation theory of the $\alpha$-determinants.

\begin{keywords}
Alpha-determinant,
cyclic modules,
irreducible decomposition,
Gelfand pair,
zonal spherical functions,
Jacobi polynomials.
\end{keywords}

\begin{MSC}
22E47, 
43A90. 
\end{MSC}
\end{abstract}

\section{Introduction}

Let $n$ be a positive integer.
We denote by $\sym n$ the symmetric group of degree $n$.
For a permutation $\sigma\in\sym n$, we define
$$
\cw\sigma\deq\sum_{i\ge1}(i-1)m_i(\sigma)\qquad(\sigma\in\sym n),
$$
where $m_i(\sigma)$ is the number of $i$-cycles
in the disjoint cycle decomposition of $\sigma$.
We notice that $\cw\cdot$ is a class function on $\sym n$.
It is easy to see that $(-1)^{\cw\sigma}=\sgn\sigma$ is the signature
of a permutation $\sigma$.

Let $\alpha$ be a complex number
and $A=(a_{ij})_{1\le i,j\le n}$ an $n$ by $n$ matrix.
The $\alpha$-determinant $\adet(A)$ of $A$ is defined by
\begin{equation}\label{eq:def_of_adet}
\adet(A)\deq\sum_{\sigma\in\sym n}\anu\sigma a_{\sigma(1)1}a_{\sigma(2)2}\dots a_{\sigma(n)n}.
\end{equation}
We readily see that
the $\alpha$-determinant
$\adet(A)$ coincides with the determinant $\det(A)$
(resp. permanent $\per(A)$) of $A$
when $\alpha=-1$ (resp. $\alpha=1$).
Hence we regard
the $\alpha$-determinant as a common generalization
of the determinant and permanent.

The $\alpha$-determinant is first introduced by Vere-Jones \cite{VereJones}.
He proved the identity
\begin{equation}\label{eq:VJ}
\det(I-\alpha A)^{-1/\alpha}
=\sum_{k=0}^\infty \frac1{k!}
\sum_{1\le i_1,\dots,i_k\le n}
\adet\!\!\begin{pmatrix}
a_{i_1i_1}&\dots&a_{i_1i_k}\\
\vdots&\ddots&\vdots\\
a_{i_ki_1}&\dots&a_{i_ki_k}
\end{pmatrix}
\end{equation}
for an $n$ by $n$ matrix $A=(a_{ij})_{1\le i,j\le n}$
such that the absolute value of any eigenvalue of $A$ is less than $1$.
Here $I$ denotes the identity matrix of suitable size.
His intention of the study of the $\alpha$-determinant is
an application to probability theory.
Actually,
the identity \eqref{eq:VJ} supplies a unified treatment
of the multivariate binomial and negative binomial distributions.
Further, Shirai and Takahashi \cite{ST} proved
a Fredholm determinant version of \eqref{eq:VJ}
for a trace class integral operator and use it
to define a certain one-parameter family
of point processes.
We note that
a pfaffian analogue of the Vere-Jones identity \eqref{eq:VJ}
has been also established and is applied to probability theory
by Matsumoto \cite{M2005}.
It is also worth noting that \eqref{eq:VJ} is obtained by
specializing $p_i(x)=\alpha^{i-1}$ and regarding $y_1, \dots, y_n$
as eigenvalues of $A$ in the Cauchy identity
\begin{equation}\label{eq:Cauchy}
\prod_{i,j\ge1}\frac1{1-x_iy_j}
=\sum_{\lambda}\frac{1}{z_\lambda}p_\lambda(x)p_\lambda(y),
\end{equation}
where $\lambda$ in the right-hand side runs over the set of all partitions,
$z_\lambda$ denotes the cardinality
of the centralizer of a permutation whose cycle type is $\lambda$,
and $p_\lambda$ denotes the power-sum symmetric function
corresponding to $\lambda$
(see \cite{M} for detailed information on symmetric functions).
In fact, under the specialization,
the left-hand side of \eqref{eq:Cauchy} becomes $\det(1-\alpha t)^{-1/\alpha}$
and the right-hand side represents its expansion
in terms of $\alpha$-determinants (see also \cite{K2007c}).

In this article,
we focus our attention on the representation-theoretic aspect of
the $\alpha$-determinant.
Let $\U$ be the universal enveloping algebra
of the general linear Lie algebra $\liegl_n=\liegl_n(\C)$,
and $\A$ be the polynomial algebra in the $n^2$ variable $x_{ij}$
$(1\le i,j\le n)$.
We put $X=(x_{ij})_{1\le i,j\le n}$ and write an element in $\A$
as $f(X)$ in short.
The algebra $\A$ becomes a left $\U$-module via
\begin{align*}
E_{ij}\cdot f(X)=\sum_{s=1}^n x_{is}\frac{\partial f(X)}{\partial x_{js}}
\end{align*}
for $f(X)\in\A$
where $\{E_{ij}\}_{1\le i,j\le n}$ is the standard basis of $\liegl_n$.
Now we regard
the $\alpha$-determinant $\adet(X)$ of $X$
as an element in $\A$
and consider the cyclic submodule
\begin{align*}
\Cmod nl(\alpha)\deq\U\cdot\adet(X)^l
\end{align*}
of $\A$. Since
\begin{equation}
\Cmod n1(-1)=\U\cdot\det(X)\cong\GLmod n{(1^n)},\qquad
\Cmod n1(1)=\U\cdot\det(X)\cong\GLmod n{(n)},
\end{equation}
the module $\Cmod n1(\alpha)$ is regarded as an
interpolation of these two irreducible representations.
Here we denote by $\GLmod n\lambda$
the irreducible $\U$-module whose highest weight is $\lambda$.
We notice that
we can identify the dominant integral weights with partitions
as far as we consider the polynomial representations of $\U$.

Our main concern is to solve the
\begin{prob}\label{prob:1}
Describe the irreducible decomposition of
the $\U$-module $\Cmod nl(\alpha)$ explicitly.
\end{prob}

In \cite{KMW},
the following general result on $\Cmod nl(\alpha)$ is proved.
\begin{thm}\label{thm:main}
For each $\lambda\vdash nl$ such that $\len\lambda\le n$,
there exists a certain square matrix
$\tramat\lambda(\alpha)$ of size $K_{\lambda(l^n)}$
whose entries are polynomials in $\alpha$ such that
\begin{align*}
\Cmod nl(\alpha)\cong\bigoplus_{\substack{\lambda\vdash nl\\\len\lambda\le n}}
(\GLmod n\lambda)^{\oplus\rank\tramat\lambda(\alpha)}.
\end{align*}
Here $K_{\lambda\mu}$ denotes the Kostka number
and $\len\lambda$ is the length of $\lambda$.
\end{thm}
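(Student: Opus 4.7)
The plan is to exploit $(GL_n,GL_n)$-Howe duality on the polynomial algebra $\A$ and reduce the multiplicity problem to a rank computation in a finite-dimensional bi-weight subspace. Since $\adet(X)^l$ is homogeneous of degree $nl$, we have $\Cmod nl(\alpha)\subset\A^{nl}$. Equipping $\A^{nl}\cong\st^{nl}(\C^n\otimes\C^n)$ with the left action fixed in the introduction and the analogous right action on matrix entries, the Cauchy decomposition yields the bimodule isomorphism
\begin{equation*}
\A^{nl}\cong\bigoplus_{\substack{\lambda\vdash nl\\\len\lambda\le n}}
\GLmod n\lambda\otimes\GLmod n\lambda,
\end{equation*}
where the first tensor factor carries the left action and the second the right.

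Inspection of \eqref{eq:def_of_adet} shows that every monomial in $\adet(X)$ contains each row and each column index of $X$ exactly once; hence $\adet(X)$ has left and right weights both equal to $(1^n)$, and $\adet(X)^l$ has both weights equal to $(l^n)$. Thus $\adet(X)^l$ lies in the bi-$(l^n)$-weight summand
\begin{equation*}
\bigoplus_\lambda (\GLmod n\lambda)_{(l^n)}\otimes(\GLmod n\lambda)_{(l^n)},
\end{equation*}
and, after fixing a basis of $(\GLmod n\lambda)_{(l^n)}$ indexed by semistandard tableaux of shape $\lambda$ and content $(l^n)$, the projection of $\adet(X)^l$ onto the $\lambda$-block is encoded by a square matrix $\tramat\lambda(\alpha)$ of size $\dim(\GLmod n\lambda)_{(l^n)}=K_{\lambda(l^n)}$ whose entries are $\C$-linear combinations of coefficients $\anu{\sigma_1}\cdots\anu{\sigma_l}$ coming from $\adet(X)^l$, and so are polynomials in $\alpha$. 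The multiplicity count then follows from a general principle: if $v=\sum_j v_j\otimes u_j\in\GLmod n\lambda\otimes(\GLmod n\lambda)_{(l^n)}$ with the $u_j$ linearly independent, irreducibility of $\GLmod n\lambda$ forces $\U\cdot v=\GLmod n\lambda\otimes\mathrm{span}\{v_j\}$, which contributes $\GLmod n\lambda$ with multiplicity $\dim\mathrm{span}\{v_j\}=\rank\tramat\lambda(\alpha)$. Summing over $\lambda$ yields the theorem.

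The main obstacle is the \emph{squaring-up} of the matrix: a priori a cyclic vector in $\GLmod n\lambda\otimes(\GLmod n\lambda)_{(l^n)}$ would only give a rectangular block of size $\dim\GLmod n\lambda\times K_{\lambda(l^n)}$, and it is precisely the left-weight-$(l^n)$ constraint that shrinks the first factor down to its $(l^n)$-weight subspace, producing the square matrix of size $K_{\lambda(l^n)}$ asserted by the theorem. Identifying the resulting entries with a variation of the zonal spherical Fourier transform for the Gelfand pair $(\sym{nl},\sym l^n)$---the viewpoint promised in the abstract---is then natural via the Schur--Weyl identification $(\GLmod n\lambda)_{(l^n)}\cong(S^\lambda)^{\sym l^n}$, under which each entry of $\tramat\lambda(\alpha)$ becomes the pairing of a $\sym l^n$-biinvariant function built from $\alpha^{\cw\cdot}$ against a matrix coefficient of the Specht module $S^\lambda$.
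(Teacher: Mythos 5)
Your argument is correct, but it is not the route this paper takes: you have essentially reconstructed the original proof of \cite{KMW}, whereas the present paper derives Theorem \ref{thm:main} as a byproduct of the Schur--Weyl picture in Section 2. Concretely, you decompose the degree-$nl$ part of $\A$ by the Cauchy/Howe duality $\st^{nl}(\C^n\otimes\C^n)\cong\bigoplus_\lambda\GLmod n\lambda\otimes\GLmod n\lambda$, locate $\adet(X)^l$ in the bi-$(l^n)$-weight space, and read off the multiplicity as the rank of the coefficient matrix via Burnside density; the paper instead transports $\U\cdot D(X;\phi)$ to $V\cdot e\Phi e$ inside $V=(\C^n)^{\otimes nl}$ by the explicit intertwiner $\intertwiner$ of Lemma \ref{lem:cyclic}, applies Schur--Weyl duality, and identifies the multiplicity with $\rank_{\End(\Smod\lambda\cdot e)}(e\Phi e)$, i.e.\ the rank of the matrix $\bigl(\sum_{h\in H}\anu h\psi^\lambda_{ij}(h)\bigr)$ of spherical Fourier coefficients. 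Your approach is shorter and more self-contained for the bare existence statement; the paper's approach buys the explicit formula for the entries of $\tramat\lambda(\alpha)$ as a spherical transform over the Gelfand-pair structure $(\sym{nl},K)$, which is exactly what powers the $n=2$ computation via Hahn/Jacobi polynomials in Section 3. (Your closing remark correctly anticipates this dictionary through $(\GLmod n\lambda)_{(l^n)}\cong(\Smod\lambda)^K$.) Two small points to tighten: the identity $\U\cdot v=\GLmod n\lambda\otimes\mathrm{span}\{v_j\}$ is a typographical slip --- the $v_j$ live in the first tensor factor, so you mean $\U\cdot v\cong\GLmod n\lambda^{\oplus\dim\mathrm{span}\{v_j\}}$; and you should say explicitly that $\U\cdot\adet(X)^l$ splits as the direct sum over $\lambda$ of the cyclic modules generated by the isotypic components, which holds because the isotypic projectors of the finite-dimensional semisimple module $\A^{nl}$ lie in the image of $\U$.
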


We call this matrix $\tramat\lambda(\alpha)$
the \emph{transition matrix} for $\lambda$ in $\Cmod nl(\alpha)$.
We notice that the transition matrix is determined up to conjugacy.
Thus, Problem \ref{prob:1} is reduced to
the determination of the matrices $\tramat\lambda(\alpha)$
relative to a certain (nicely chosen) basis.
Up to the present,
we have obtained an explicit form of $\tramat\lambda(\alpha)$
in only several particular cases.

\begin{ex}\label{ex:MW2005}
When $l=1$, Problem \ref{prob:1} is completely solved in \cite{MW2005} as follows:
For each positive integer $n$, we have
\begin{equation}
\Cmod n1(\alpha)=
\U\cdot\adet(X)
\cong\bigoplus_{\substack{\lambda\vdash n\\ \cp\lambda(\alpha)\ne0}}
\kakko{\GLmod n\lambda}^{\oplus f^\lambda},
\end{equation}
where $f_\lambda(\alpha)$ is a (modified) \emph{content polynomial}
\begin{align*}
f_\lambda(\alpha)\deq\prod_{i=1}^{\len\lambda}\prod_{j=1}^{\lambda_i}
(1+(j-i)\alpha).
\end{align*}
In other words, for each $\lambda\vdash n$, we have
\begin{equation}
\text{multiplicity of $\GLmod n\lambda$ in $\Cmod n1(\alpha)$}
=\begin{cases}
0 & \alpha\in\ckakko{1/k\,;\,1\le k<\len\lambda}
\sqcup\ckakko{-1/k\,;\,1\le k<\lambda_1},\\
f^\lambda & \text{otherwise.}
\end{cases}
\end{equation}
The transition matrix $\tramat[n,1]\lambda(\alpha)$ in this case
is given by $f_\lambda(\alpha)I$.
\end{ex}

\begin{ex}\label{ex:Jacobi}
When $n=2$, the transition matrix $\tramat[2,l]\lambda(\alpha)$
is of size $1$ (i.e. just a polynomial) and
it is shown in \cite[Theorem 4.1]{KMW} that
\begin{equation}\label{eq:decomp_of_V2l}
\Cmod 2l(\alpha)=
\U[2]\cdot\adet(X)^l
\cong\bigoplus_{\substack{0\le s\le l\\ F_{2,l}^{(2l-s,s)}(\alpha)\ne0}}
\GLmod 2{(2l-s,s)},
\end{equation}
where we put
\begin{align*}
F_{2,l}^{(2l-s,s)}(\alpha)
&=(1+\alpha)^{l-s}G^l_s(\alpha),\\
G^l_s(\alpha)
&=\sum_{j=0}^l\frac{\phs{-s}j\phs{l-s+1}j}{\phs{-l}j}\frac{(-\alpha)^j}{j!}.
\end{align*}
Here $\phs aj=\Gamma(a+j)/\Gamma(a)$ is the Pochhammer symbol.
We note that $G^l_s(\alpha)$ is written by a \emph{Jacobi polynomial} as
\begin{align*}
G^l_s(\alpha)=\binom{s-l-1}s^{\!\!-1}
P^{(-l-1,2l-2s+1)}(1+2\alpha).
\end{align*}
\end{ex}


In this paper,
we show that the entries of the transition matrices  $\tramat\lambda(\alpha)$
are given by a variation of
the spherical Fourier transformation
of a certain class function on $\sym{nl}$
with respect to the subgroup $\sym l^n$ (Theorem \ref{thm:mymain}).
This result also provides another proof of Theorem \ref{thm:main}.
Further, we give a new calculation of
the polynomial $F_{2,l}^{(2l-s,s)}(\alpha)$ in Example \ref{ex:Jacobi}
by using an explicit formula for the values of zonal spherical functions
for the Gelfand pair $(\sym{2n},\sym n\times\sym n)$ due to Bannai and Ito
(Theorem \ref{thm:my_n=2_case}).

\section{Irreducible decomposition of $\Cmod nl(\alpha)$}

Fix $n,l\in\N$.
Consider the standard tableau $\reftab$ with shape $(l^n)$
such that the $(i,j)$-entry of $\reftab$ is $(i-1)l+j$.
For instance, if $n=3$ and $l=2$, then
\begin{equation*}
\reftab=\lower1.5em\hbox{\setlength{\unitlength}{1.15em}%
\begin{picture}(2,3)
\multiput(0,0)(0,1){4}{\line(1,0){2}}
\multiput(0,0)(1,0){3}{\line(0,1){3}}
\put(0,2){\makebox(1,1){$1$}}
\put(1,2){\makebox(1,1){$2$}}
\put(0,1){\makebox(1,1){$3$}}
\put(1,1){\makebox(1,1){$4$}}
\put(0,0){\makebox(1,1){$5$}}
\put(1,0){\makebox(1,1){$6$}}
\end{picture}}\,.
\end{equation*}
We denote by $K=R(\reftab)$ and $H=C(\reftab)$
the row group and column group of the standard tableau $\reftab$
respectively.
Namely,
\begin{equation}
K=\ckakko{g\in\sym{nl}\,;\,\ceil{g(x)}=\ceil{x},\ x\in[nl]},\quad
H=\ckakko{g\in\sym{nl}\,;\,g(x)\equiv x\pmod l,\ x\in[nl]},
\end{equation}
where we denote by $[nl]$ the set $\{1,2,\dots,nl\}$.
We put
\begin{equation}
e=\frac1{\card K}\sum_{k\in K}k\in\C[\sym{nl}].
\end{equation}
This is clearly an idempotent element in $\C[\sym{nl}]$.
Let $\varphi$ be a class function on $H$.
We put
\begin{align*}
\Phi\deq\sum_{h\in H}\varphi(h)h\in\C[\sym{nl}].
\end{align*}
Consider the tensor product space $V=(\C^n)^{\otimes nl}$.
We notice that $V$ has a $(\U,\C[\sym{nl}])$-module structure given by
\begin{align*}
E_{ij}\cdot \ve_{i_1}\otimes\dots\otimes\ve_{i_{nl}}
&\deq
\sum_{s=1}^{nl}\delta_{i_{s},j}\,
\ve_{i_{1}}\otimes\dots\otimes\overset{\text{$s$-th}}{\ve_i}\otimes\dots\otimes\ve_{i_{nl}},\\
\ve_{i_1}\otimes\dots\otimes\ve_{i_{nl}}\cdot\sigma
&\deq
\ve_{i_{\sigma(1)}}\otimes\dots\otimes\ve_{i_{\sigma(nl)}}
\qquad(\sigma\in\sym{nl})
\end{align*}
where $\{\ve_i\}_{i=1}^{n}$ denotes the standard basis of $\C^n$.
The main concern of this section is to solve the
\begin{prob}\label{prob:generalized}
Describe the irreducible decomposition of
the left $\U$-module $V\cdot e\Phi e$.
\end{prob}


Here we show that Problem \ref{prob:generalized}
includes Problem \ref{prob:1} as a special case.
We consider the group isomorphism
$\theta:H\to\sym n^l$ defined by
\begin{align*}
\theta(h)\deq(\theta(h)_1,\dots,\theta(h)_l);\quad
\theta(h)_i(x)=y \iff h((x-1)l+i)=(y-1)l+i.
\end{align*}
We also define an element $D(X;\phi)\in\A$ by
\begin{align*}
D(X;\phi)&\deq
\sum_{h\in H}\phi(h)
\prod_{q=1}^n\prod_{p=1}^l x_{\theta(h)_p(q),q}
=\sum_{h\in H}\phi(h)
\prod_{q=1}^n\prod_{p=1}^l x_{q,\theta(h)^{-1}_p(q)}\\
&=\sum_{\sigma_1,\dots,\sigma_l\in\sym n}
\phi(\theta^{-1}(\sigma_1,\dots,\sigma_l))
\prod_{q=1}^n\prod_{p=1}^l x_{\sigma_p(q),q}.
\end{align*}
We note that $D(X;\anu\cdot)=\adet(X)^l$
since
$\cw{\theta^{-1}(\sigma_1,\dots,\sigma_l)}
=\cw{\sigma_1}+\dots+\cw{\sigma_l}$
for $(\sigma_1,\dots,\sigma_l)\in\sym l^n$.

Take a class function $\delta_H$ on $H$ defined by
\begin{align*}
\delta_H(h)=\begin{cases}
1 & h=1 \\ 0 & h\ne1.
\end{cases}
\end{align*}
We see that $D(X;\delta_H)=(x_{11}x_{22}\dots x_{nn})^l$.
We need the following lemma
(see {\cite[Lemma 2.1]{KMW}} for the proof of (1).
The assertion (2) is immediate).

\begin{lem}\label{lem:cyclic}
{\upshape(1)}
It holds that
\begin{align*}
&\U\cdot\ve_1^{\otimes l}\otimes\dots\otimes\ve_n^{\otimes l}
=V\cdot e=\st^l(\C^n)^{\otimes n},\\
&\U\cdot D(X;\delta_H)
=\bigoplus_{\substack{i_{pq}\in\{1,2,\dots,n\}\\(1\le p\le l,\,1\le q\le n)}}
\C\cdot\prod_{q=1}^n\prod_{p=1}^l x_{i_{pq}q}
\cong\st^l(\C^n)^{\otimes n}.
\end{align*}

\noindent{\upshape(2)}
The map
\begin{align*}
\intertwiner:\U\cdot D(X;\delta_H)\ni
\prod_{q=1}^n\prod_{p=1}^l x_{i_{pq}q}
\longmapsto
(\ve_{i_{11}}\otimes\dots\otimes\ve_{i_{l1}})\otimes\dots\otimes%
(\ve_{i_{1n}}\otimes\dots\otimes\ve_{i_{ln}})\cdot e\in V\cdot e
\end{align*}
is a bijective $\U$-intertwiner.
\qed
\end{lem}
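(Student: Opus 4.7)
The lemma has two parts: part (1) identifies two realizations of $\st^l(\C^n)^{\otimes n}$ as cyclic $\U$-modules, one inside $V$ generated by $\vv_0 = \ve_1^{\otimes l}\otimes\dots\otimes\ve_n^{\otimes l}$ and one inside $\A$ generated by $D(X;\delta_H) = (x_{11}x_{22}\dots x_{nn})^l$; part (2) asserts that the obvious matching of monomial bases between these two realizations is a $\U$-intertwiner. My plan is to treat the two realizations in parallel, exploit the natural column/block structure that the $\U$-action preserves on each side, and then identify the two sides monomial-by-monomial.

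For the tensor realization, I would first observe that the right $K$-action on $V$ permutes the $nl$ tensor slots blockwise in $n$ groups of $l$, so $e = \card{K}^{-1}\sum_{k\in K}k$ is the blockwise symmetrizer and the identification $V\cdot e = \st^l(\C^n)^{\otimes n}$ is immediate. Since $\vv_0$ is $K$-fixed, it automatically lies in $V\cdot e$; the real content is the cyclic generation $\U\cdot\vv_0 = V\cdot e$, which I would prove by showing that the root operators $E_{ij}$, acting as derivations on each symmetric factor, can carry $\vv_0$ (of weight $(l,l,\dots,l)$) to any blockwise symmetric simple tensor through a suitably PBW-ordered monomial in the $E_{ij}$. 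This is precisely the reachability argument of \cite[Lemma 2.1]{KMW}, which I would cite. In parallel, for the polynomial realization, $E_{ij}$ acts on $\A$ as the derivation replacing each $x_{js}$ by $x_{is}$, hence preserves the multi-set of second (column) indices appearing in any monomial; starting from $\prod_q x_{qq}^l$ and iterating produces exactly the monomials $\prod_q\prod_p x_{i_{pq}q}$ in which each second index $q$ occurs $l$ times, and the commutativity of factors sharing a column gives the stated isomorphism with $\st^l(\C^n)^{\otimes n}$.

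For part (2), the map $\intertwiner$ is well-defined because the commutativity of the $x_{i_{pq}q}$ with fixed $q$ on the polynomial side exactly mirrors the $K$-symmetrization imposed by $e$ on the tensor side. Bijectivity is the matching of monomial bases, both indexed by an $n$-tuple of size-$l$ multisets in $[n]$. The intertwining identity reduces to a monomial-level check that replacing $j$ by $i$ in the first/row index on the polynomial side and replacing $\ve_j$ by $\ve_i$ in the corresponding tensor slot on the other side are literally the same operation under the basis identification. The one genuine obstacle is the cyclic generation assertion in part (1), which I plan to import verbatim from \cite[Lemma 2.1]{KMW}; once that reachability statement is granted, everything else in the lemma is routine bookkeeping.
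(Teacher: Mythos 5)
Your proposal is correct and follows essentially the same route as the paper, which likewise imports the cyclic‑generation statement of part (1) from \cite[Lemma 2.1]{KMW} and treats part (2) as immediate bookkeeping (matching the blockwise‑symmetric monomial bases and checking the $E_{ij}$‑action termwise). The extra detail you supply on the column‑index invariance and the identification of $e$ with the blockwise symmetrizer is exactly the routine content the paper leaves implicit.
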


We see that
\begin{align*}
\intertwiner\kakko{D(X;\phi)}
&=\sum_{h\in H}\phi(h)
\intertwiner\kakko{\prod_{q=1}^n\prod_{p=1}^l x_{\theta(h)_p(q),q}}\\
&=\sum_{h\in H}\phi(h)
(\ve_{\theta(h)_1(1)}\otimes\dots\otimes\ve_{\theta(h)_l(1)})\otimes\dots\otimes%
(\ve_{\theta(h)_1(n)}\otimes\dots\otimes\ve_{\theta(h)_l(n)})\cdot e\\
&=\ve_1^{\otimes l}\otimes\dots\otimes\ve_n^{\otimes l}%
\cdot\sum_{h\in H}\phi(h)h\cdot e
=\ve_1^{\otimes l}\otimes\dots\otimes\ve_n^{\otimes l}%
\cdot e\Phi e
\end{align*}
by (2) in Lemma \ref{lem:cyclic}.
Using (1) in Lemma \ref{lem:cyclic},
we have the
\begin{lem}
It holds that
\begin{align*}
\U\cdot D(X;\phi)\cong V\cdot e\Phi e
\end{align*}
as a left $\U$-module.
In particular, $V\cdot e\Phi e\cong\Cmod nl(\alpha)$
if $\varphi(h)=\anu h$.
\qed
\end{lem}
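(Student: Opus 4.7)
The proof plan is to transport the identity along the intertwiner $\intertwiner$ from Lemma \ref{lem:cyclic}(2), using that the right $\C[\sym{nl}]$-action commutes with the left $\U$-action on $V=(\C^n)^{\otimes nl}$ (i.e.\ Schur--Weyl duality is in the background, though we only need the commutation property here).

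First, I would observe that $D(X;\phi)$ lies in $\U\cdot D(X;\delta_H)$: by definition it is a $\C$-linear combination of monomials of the form $\prod_{q,p}x_{i_{pq}q}$, all of which lie in $\U\cdot D(X;\delta_H)$ by the first part of Lemma \ref{lem:cyclic}(1). Hence $\intertwiner$ is applicable to $D(X;\phi)$, and since $\intertwiner$ is a $\U$-module isomorphism, it carries the $\U$-cyclic submodule $\U\cdot D(X;\phi)$ onto $\U\cdot \intertwiner\bigl(D(X;\phi)\bigr)$.

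Next, the computation displayed immediately after Lemma \ref{lem:cyclic} shows
\[
\intertwiner\bigl(D(X;\phi)\bigr)=\ve_1^{\otimes l}\otimes\dots\otimes\ve_n^{\otimes l}\cdot e\Phi e.
\]
Since the right action of $\C[\sym{nl}]$ on $V$ commutes with the left $\U$-action, one has
\[
\U\cdot\bigl(\ve_1^{\otimes l}\otimes\dots\otimes\ve_n^{\otimes l}\cdot e\Phi e\bigr)
=\bigl(\U\cdot\ve_1^{\otimes l}\otimes\dots\otimes\ve_n^{\otimes l}\bigr)\cdot e\Phi e
=(V\cdot e)\cdot e\Phi e,
\]
where the last equality uses Lemma \ref{lem:cyclic}(1). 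Finally, $e$ is idempotent, so $(V\cdot e)\cdot e\Phi e=V\cdot e\Phi e$. Combining these steps gives the isomorphism $\U\cdot D(X;\phi)\cong V\cdot e\Phi e$.

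For the second assertion, specialize $\phi(h)=\anu h$. The remark preceding the statement of the lemma already notes that $D(X;\anu\cdot)=\adet(X)^l$, so in this case $\U\cdot D(X;\phi)=\U\cdot\adet(X)^l=\Cmod nl(\alpha)$ by definition, and the first part yields $\Cmod nl(\alpha)\cong V\cdot e\Phi e$. The argument is essentially pure bookkeeping; the only point that might be considered delicate is confirming that $D(X;\phi)$ belongs to $\U\cdot D(X;\delta_H)$ (so that $\intertwiner$ can be applied) and that the right action of $e\Phi e$ commutes with the left $\U$-action, but both are immediate from Lemma \ref{lem:cyclic} and the bimodule structure of $V$.
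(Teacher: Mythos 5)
Your proposal is correct and follows essentially the same route as the paper: the author also applies the intertwiner $\intertwiner$ of Lemma \ref{lem:cyclic}(2) to $D(X;\phi)$, obtains $\ve_1^{\otimes l}\otimes\dots\otimes\ve_n^{\otimes l}\cdot e\Phi e$ by the displayed computation, and then invokes Lemma \ref{lem:cyclic}(1) together with the idempotency of $e$ to identify the cyclic module with $V\cdot e\Phi e$. Your explicit remarks about $D(X;\phi)$ lying in $\U\cdot D(X;\delta_H)$ and the commutation of the two actions merely make precise what the paper leaves implicit.
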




By the Schur-Weyl duality, we have
\begin{align*}
V\cong\bigoplus_{\lambda\vdash nl}\GLmod n\lambda\boxtimes\Smod\lambda.
\end{align*}
Here
$\Smod\lambda$ denotes the irreducible unitary right $\sym{nl}$-module
corresponding to $\lambda$.
We see that
\begin{align*}
\dim\kakko{\Smod\lambda\cdot e}
=\inprod[\sym{nl}]{\ind_K^G\triv K}{\Smod\lambda}
=K_{\lambda(l^n)}
\end{align*}
where $\triv K$ is the trivial representation of $K$ and
$\inprod[\sym{nl}]\pi\rho$ is the intertwining number
of given representations $\pi$ and $\rho$ of $\sym{nl}$.
Since $K_{\lambda(l^n)}=0$ unless $\len\lambda\le n$,
it follows the
\begin{thm}
It holds that
\begin{align*}
V\cdot e\Phi e
\cong\bigoplus_{\substack{\lambda\vdash nl\\ \len\lambda\le n}}
\GLmod n\lambda\boxtimes\kakko{\Smod\lambda\cdot e\Phi e}.
\end{align*}
In particular, as a left $\U$-module,
the multiplicity of $\GLmod n\lambda$ in $V\cdot e\Phi e$
is given by
\begin{align*}
\dim\kakko{\Smod\lambda\cdot e\Phi e}
=\rank_{\End(\Smod\lambda\cdot e)}(e\Phi e).
\end{align*}
\qed
\end{thm}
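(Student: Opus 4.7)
The plan is to read both statements off of Schur--Weyl duality, combined with the elementary observation that right multiplication by $e\Phi e\in\C[\sym{nl}]$ is a $\U$-module endomorphism of $V=(\C^n)^{\otimes nl}$. Schur--Weyl supplies
\[
V\cong\bigoplus_{\lambda\vdash nl}\GLmod n\lambda\boxtimes\Smod\lambda,
\]
with summands indexed by $\len\lambda>n$ contributing nothing (either $\GLmod n\lambda=0$, or equivalently $K_{\lambda(l^n)}=0$ so that $\Smod\lambda\cdot e$ vanishes). Because the left $\U$-action and the right $\C[\sym{nl}]$-action on $V$ commute, right multiplication by $e\Phi e$ preserves each Schur--Weyl component and operates only on the second tensor factor. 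This gives
\[
V\cdot e\Phi e\cong
\bigoplus_{\substack{\lambda\vdash nl\\ \len\lambda\le n}}
\GLmod n\lambda\boxtimes\kakko{\Smod\lambda\cdot e\Phi e}
\]
as a left $\U$-module, whence the multiplicity of $\GLmod n\lambda$ equals $\dim(\Smod\lambda\cdot e\Phi e)$.

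For the rank identity, I would exploit the idempotency $e^2=e$. For any $v\in\Smod\lambda\cdot e$ one has $v\cdot e=v$, so
\[
v\cdot e\Phi e=v\cdot\Phi e=(v\cdot\Phi)\cdot e\in\Smod\lambda\cdot e.
\]
Hence right multiplication by $e\Phi e$ restricts to a well-defined endomorphism of the finite-dimensional subspace $\Smod\lambda\cdot e$, and its image is precisely $(\Smod\lambda\cdot e)\cdot\Phi e=\Smod\lambda\cdot e\Phi e$. Taking dimensions yields $\dim(\Smod\lambda\cdot e\Phi e)=\rank_{\End(\Smod\lambda\cdot e)}(e\Phi e)$, which is the second assertion.

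There is no genuine technical obstacle: granted Schur--Weyl duality, both statements are essentially bookkeeping. The real conceptual content is that the multiplicity question for the (potentially very large) $\U$-module $V\cdot e\Phi e$ is repackaged as a rank question for a single operator on the much smaller space $\Smod\lambda\cdot e$, whose dimension is $K_{\lambda(l^n)}$. This is exactly what makes the transition matrices $\tramat\lambda(\alpha)$ of Theorem~\ref{thm:main} computable in principle, and it is this rank interpretation that will be exploited in the explicit $n=2$ calculation carried out later via zonal spherical functions.
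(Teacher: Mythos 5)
Your proposal is correct and follows essentially the same route as the paper, which derives the theorem directly from Schur--Weyl duality together with the observation that $\dim(\Smod\lambda\cdot e)=K_{\lambda(l^n)}$ vanishes unless $\len\lambda\le n$; the paper in fact states the result with no further argument. Your explicit verification that $e\Phi e$ restricts to an endomorphism of $\Smod\lambda\cdot e$ and that its image is $\Smod\lambda\cdot e\Phi e$ just spells out the bookkeeping the paper leaves implicit.
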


Let $\lambda\vdash nl$ be a partition such that $\len\lambda\le n$
and put $d=K_{\lambda(l^n)}$.
We fix an orthonormal basis
$\{\ve_1^\lambda,\dots,\ve_{f^\lambda}^\lambda\}$
of $\Smod\lambda$ such that the first $d$ vectors
$\ve_1^\lambda,\dots,\ve_d^\lambda$ form a subspace
$(\Smod\lambda)^K$ consisting of $K$-invariant vectors
and left $f^\lambda-d$ vectors form
the orthocomplement of $(\Smod\lambda)^K$
with respect to the $\sym{nl}$-invariant inner product.
The matrix coefficient of $\Smod\lambda$
relative to this basis is
\begin{equation}\label{eq:def_of_psi}
\psi^\lambda_{ij}(g)
=\inprod[\Smod\lambda]{\ve_i^\lambda\cdot g}{\ve_j^\lambda}
\quad(g\in\sym{nl},\ 1\le i,j\le f^\lambda).
\end{equation}
We notice that this function is $K$-biinvariant.
We see that the multiplicity of $\GLmod n\lambda$ in $V\cdot e\Phi e$
is given by the rank of the matrix
\begin{align*}
\kakko{\sum_{h\in H}\varphi(h)\psi^\lambda_{ij}(h)}_{1\le i,j\le d}.
\end{align*}

As a particular case, we obtain the
\begin{thm}\label{thm:mymain}
The multiplicity of the irreducible representation $\GLmod n\lambda$
in the cyclic module $\U\cdot\adet(X)^l$ is equal to the rank of
\begin{equation}
\tramat\lambda(\alpha)
=\kakko{\sum_{h\in H}\anu h\psi^\lambda_{ij}(h)}_{1\le i,j\le d},
\end{equation}
where $\{\psi^\lambda_{ij}\}_{i,j}$ denotes a basis of
the $\lambda$-component of the space $C(K\backslash\sym{nl}/K)$
of $K$-biinvariant functions on $\sym{nl}$
given by \eqref{eq:def_of_psi}.
\end{thm}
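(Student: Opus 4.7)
My plan is to obtain Theorem \ref{thm:mymain} as a direct specialization of the preceding theorem (which governs the decomposition of $V \cdot e\Phi e$ for an arbitrary class function $\varphi$ on $H$), applied to the choice $\varphi(h) = \anu h$. By the lemma just above, this choice gives $V \cdot e\Phi e \cong \U \cdot \adet(X)^l = \Cmod{n}{l}(\alpha)$, so the multiplicity of $\GLmod{n}{\lambda}$ in $\Cmod{n}{l}(\alpha)$ is $\rank_{\End(\Smod{\lambda}\cdot e)}(e\Phi e)$, and it remains only to compute this rank in the chosen basis.

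The key observation is that, in the basis adapted to $K$, the idempotent $e = |K|^{-1}\sum_{k \in K} k$ acts on $\Smod{\lambda}$ as orthogonal projection onto the $K$-fixed subspace $(\Smod{\lambda})^K$. Concretely, with $d := K_{\lambda(l^n)}$, this gives $\ve_i^\lambda \cdot e = \ve_i^\lambda$ for $1 \le i \le d$ and $\ve_j^\lambda \cdot e = 0$ for $j > d$; in particular $\Smod{\lambda}\cdot e = (\Smod{\lambda})^K$ has basis $\{\ve_1^\lambda,\dots,\ve_d^\lambda\}$.

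Next, for $1 \le i \le d$ I would expand the right action using the matrix-coefficient formula $\ve_i^\lambda \cdot h = \sum_{j=1}^{f^\lambda} \psi^\lambda_{ij}(h)\,\ve_j^\lambda$ to obtain
$$
\ve_i^\lambda \cdot e\Phi e \;=\; \sum_{h \in H} \varphi(h)\,(\ve_i^\lambda \cdot h) \cdot e \;=\; \sum_{j=1}^{d}\Bigl(\sum_{h \in H}\varphi(h)\,\psi^\lambda_{ij}(h)\Bigr)\ve_j^\lambda ,
$$
where the $j$-sum truncates at $d$ because $\ve_j^\lambda \cdot e = 0$ for $j > d$. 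Thus the matrix of $e\Phi e$ restricted to $\Smod{\lambda}\cdot e$ in this basis is exactly $\bigl(\sum_{h \in H} \varphi(h)\,\psi^\lambda_{ij}(h)\bigr)_{1 \le i,j \le d}$, and substituting $\varphi(h) = \anu h$ yields $\tramat{\lambda}(\alpha)$ as claimed.

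The parenthetical assertion that $\{\psi^\lambda_{ij}\}_{1\le i,j\le d}$ forms a basis of the $\lambda$-component of $C(K\backslash \sym{nl}/K)$ is a standard consequence of Peter--Weyl and Schur orthogonality: $\{\psi^\lambda_{ij}\}_{1\le i,j\le f^\lambda}$ is a basis of the $\lambda$-isotypic part of $\C[\sym{nl}]$, and the $K$-biinvariant subspace within this isotypic piece retains exactly the pairs $(i,j)$ with $\ve_i^\lambda,\ve_j^\lambda \in (\Smod{\lambda})^K$, i.e.\ $i,j \le d$. I do not anticipate any serious obstacle; the main point is simply careful bookkeeping with the $K$-adapted basis, the bulk of the representation-theoretic work having already been done in the preceding theorem.
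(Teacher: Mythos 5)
Your proposal is correct and follows essentially the same route as the paper: the paper likewise derives the theorem by specializing the preceding general result on $V\cdot e\Phi e$ to $\varphi(h)=\anu h$ and reading off the matrix of $e\Phi e$ on $\Smod\lambda\cdot e$ in the $K$-adapted orthonormal basis, which is exactly $\bigl(\sum_{h\in H}\varphi(h)\psi^\lambda_{ij}(h)\bigr)_{1\le i,j\le d}$. Your computation of $\ve_i^\lambda\cdot e\Phi e$ just makes explicit the step the paper leaves as ``We see that\dots''.
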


\begin{rem}
\begin{enumerate}
\item
By the definition of the basis $\{\psi^\lambda_{ij}\}_{i,j}$
in \eqref{eq:def_of_psi},
we have $\tramat\lambda(0)=I$.
\item
Since $\anu{g^{-1}}=\anu g$ and
$\psi^\lambda_{ij}(g^{-1})=\overline{\psi^\lambda_{ji}(g)}$
for any $g\in\sym{nl}$,
the transition matrices satisfy
$\tramat\lambda(\alpha)^*=\tramat\lambda(\overline\alpha)$.
\item
In Examples \ref{ex:MW2005} and \ref{ex:Jacobi},
the transition matrices
are given by \emph{diagonal matrices}.
We expect that any transition matrix $\tramat\lambda(\alpha)$
is \emph{diagonalizable} in $\Mat_{K_{\lambda(l^n)}}(\C[\alpha])$.
\end{enumerate}
\end{rem}

\begin{ex}[Example \ref{ex:MW2005}]
If $l=1$, then $H=G=\sym n$ and $K=\{1\}$.
Therefore, for any $\lambda\vdash n$, we have
\begin{equation}
\tramat[n,1]\lambda(\phi)
=\frac{n!}{f^\lambda}\inprod[\sym n]{\phi}{\chi^\lambda}I
\end{equation}
by the orthogonality of the matrix coefficients. 
Here $\chi^\lambda$ denotes the irreducible character of $\sym n$
corresponding to $\lambda$.
In particular, if $\phi=\anu\cdot$, then
\begin{equation}
\tramat[n,1]\lambda(\alpha)
=\cp\lambda(\alpha)I
\end{equation}
since the Fourier expansion of $\anu\cdot$ 
(as a class function on $\sym n$) is
\begin{equation}\label{eq:specialFCF}
\anu\cdot
=\sum_{\lambda\vdash n}
\frac{f^\lambda}{n!}f_\lambda(\alpha)\chi^\lambda,
\end{equation}
which is obtained by specializing
the Frobenius character formula for $\sym n$
(see, e.g. \cite{M}).
\end{ex}

The trace of the transition matrix $\tramat\lambda(\alpha)$ is
\begin{equation}
\gcp\lambda{n,l}\deq
\tr\tramat\lambda(\alpha)
=\sum_{h\in H}\anu h\omega^\lambda(h),
\end{equation}
where $\omega^\lambda$ is the \emph{zonal spherical function} for $\lambda$
with respect to $K$ defined by
\begin{align*}
\omega^\lambda(g)\deq
\frac1{\card K}\sum_{k\in K}\chi^\lambda(kg)
\quad(g\in\sym{nl}).
\end{align*}
This is regarded as a generalization of the modified content polynomial
since $\gcp\lambda{n,1}=f^\lambda f_\lambda(\alpha)$ as we see above.
It is much easier to handle these polynomials
than the transition matrices.
If we could prove that a transition matrix $\tramat\lambda$ is a scalar matrix,
then we would have $\tramat\lambda=d^{-1}\gcp\lambda{n,l}I$
and hence we see that the multiplicity of $\GLmod n\lambda$
in $\Cmod nl(\alpha)$ is completely controlled
by the single polynomial $\gcp\lambda{n,l}$. 
In this sense,
it is desirable to obtain a characterization
of the irreducible representations
whose corresponding transition matrices are scalar
as well as to get an explicit expression for the polynomials $\gcp\lambda{n,l}$.
We will investigate these polynomials $\gcp\lambda{n,l}$
and their generalizations in \cite{K2007c}.



\begin{ex}
Let us calculate $\gcp{(nl-1,1)}{n,l}$.
We notice that $\chi^{(nl-1,1)}(g)=\fix_{nl}(g)-1$
where $\fix_{nl}$ denotes the number of fixed points
in the natural action $\sym{nl}\curvearrowright[nl]$.
Hence we see that
\begin{align*}
\gcp{(nl-1,1)}{n,l}
&=\sum_{h\in H}\anu h\frac1{\card K}\sum_{k\in K}(\fix_{nl}(kh)-1)
=\sum_{h\in H}\anu h\frac1{\card K}\sum_{k\in K}\sum_{x\in[nl]}\delta_{khx,x}%
-\sum_{h\in H}\anu h.
\end{align*}
It is easily seen that $khx\ne x$ for any $k\in K$ if $hx\ne x$ ($x\in[nl]$).
Thus it follows that
\begin{align*}
\frac1{\card K}\sum_{k\in K}\sum_{x\in[nl]}\delta_{khx,x}
=\sum_{x\in[nl]}\delta_{hx,x}\frac1{\card K}\sum_{k\in K}\delta_{kx,x}
=\frac1l\fix_{nl}(h)\qquad(h\in H).
\end{align*}
Therefore we have
\begin{align*}
\gcp{(nl-1,1)}{n,l}
&=\frac1l\sum_{h\in H}\anu h\fix_{nl}(h)-\sum_{h\in H}\anu h
=\gcp{(n)}{n,1}^{l-1}\gcp{(n-1,1)}{n,1}\\
&=(n-1)(1-\alpha)(1-(n-1)\alpha)^{l-1}\prod_{i=1}^{n-2}(1+i\alpha)^l.
\end{align*}
We note that the transition matrix $\tramat{(nl-1,1)}$ is a scalar one
(see \cite{KMW}),
so that the multiplicity of $\GLmod n{(nl-1,1)}$ in $\Cmod nl(\alpha)$ is
zero if $\alpha\in\{1,-1,-1/2,\dots,-1/(n-1)\}$ and
$n-1$ otherwise.
\end{ex}

\section{Irreducible decomposition of $\Cmod 2l(\alpha)$ and Jacobi polynomials}

In this section,
as a particular example,
we consider the case where $n=2$ and
calculate the transition matrix $\tramat[2,l]\lambda(\alpha)$ explicitly.
Since the pair $(\sym{2l},K)$ is a \emph{Gelfand pair}
(see, e.g. \cite{M}),
it follows that
\begin{align*}
K_{\lambda(l^2)}
=\inprod[\sym{2l}]{\ind_K^{\sym{2l}}\triv K}{\Smod\lambda}
=1
\end{align*}
for each $\lambda\vdash2n$ with $\len\lambda\le2$.
Thus, in this case,
the transition matrix is just a polynomial and is given by
\begin{equation}
\tramat[2,l]\lambda(\alpha)
=\tr\tramat[2,l]\lambda(\alpha)
=\sum_{h\in H}\anu h\omega^\lambda(h)
=\sum_{s=0}^l\binom ls\omega^\lambda(g_s)\alpha^s.
\end{equation}
Here we put $g_s=(1,l+1)(2,l+2)\dots(s,l+s)\in\sym{2n}$.
Now we write $\lambda=(2l-p,p)$ for some $p$ ($0\le p\le l$).
The value $\omega^{(2l-p,p)}(g_s)$ of the zonal spherical function is
calculated by Bannai and Ito \cite[p.218]{BI} as
\begin{align*}
\omega^{(2l-p,p)}(g_s)
=Q_p(s;-l-1,-l-1,l)
=\sum_{j=0}^p(-1)^j\binom pj\binom{2l-p+1}j\binom lj^{\!\!-2}
\binom sj,
\end{align*}
where
\begin{align*}
Q_n(x;\alpha,\beta,N)&\deq
\tHGF32{-n,n+\alpha+\beta+1,-x}{\alpha+1,-N}{1}\\
&=\sum_{j=0}^N(-1)^j
\binom nj\binom{-n-\alpha-\beta-1}j
\binom{-\alpha-1}j^{\!\!-1}\binom Nj^{\!\!-1}
\binom xj
\end{align*}
is the Hahn polynomial
(see also \cite[p.399]{M}).
We also denote by $\tHGF{n+1}{n}{a_1,\dots,a_{p}}{b_1,\dots,b_{q-1},-N}x$
the hypergeometric polynomial
\begin{align*}
\tHGF pq{a_1,\dots,a_p}{b_1,\dots,b_{q-1},-N}x
=\sum_{j=0}^N\frac{\phs{a_1}j\dots\phs{a_{p}}j}
{\phs{b_1}j\dots\phs{b_{q-1}}j\phs{-N}j}\frac{x^j}{j!}
\end{align*}
for $p,q,N\in\N$ in general (see \cite{AAR}).
Further, if we put
\begin{align*}
G_p^l(x)&\deq
\tHGF21{-p,l-p+1}{-l}{-x}
=\sum_{j=0}^p(-1)^j
\binom pj\binom{l-p+j}j\binom lj^{\!\!-1}x^j,
\end{align*}
then we have the
\begin{thm}\label{thm:my_n=2_case}
Let $l$ be a positive integer.
It holds that
\begin{align*}
\tramat[2,l]{(2l-p,p)}(\alpha)
=\sum_{s=0}^l\binom ls Q_p(s;l-1,l-1,l)\alpha^s
=(1+\alpha)^{l-p}G_p^l(\alpha)
\end{align*}
for $p=0,1,\dots,l$.
\end{thm}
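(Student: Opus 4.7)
The first equality in the theorem is immediate from the general formula $\tramat[2,l]{\lambda}(\alpha)=\sum_{s=0}^l\binom{l}{s}\omega^\lambda(g_s)\alpha^s$ established earlier in this section, together with the Bannai--Ito evaluation of $\omega^{(2l-p,p)}(g_s)$ as a Hahn polynomial that is quoted just above. The substantive content of the statement is thus the closed-form expression $(1+\alpha)^{l-p}G_p^l(\alpha)$.

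The natural approach is to substitute the explicit expansion
$\omega^{(2l-p,p)}(g_s) = \sum_{j=0}^p (-1)^j\binom{p}{j}\binom{2l-p+1}{j}\binom{l}{j}^{-2}\binom{s}{j}$,
interchange the order of summation in $(s,j)$, and evaluate the inner sum over $s$ first. Applying the absorption identity $\binom{l}{s}\binom{s}{j}=\binom{l}{j}\binom{l-j}{s-j}$ together with the binomial theorem gives $\sum_{s=0}^l\binom{l}{s}\binom{s}{j}\alpha^s = \binom{l}{j}\alpha^j(1+\alpha)^{l-j}$, which collapses the double sum to
$$\tramat[2,l]{(2l-p,p)}(\alpha) = \sum_{j=0}^p (-1)^j\binom{p}{j}\binom{2l-p+1}{j}\binom{l}{j}^{-1}\alpha^j(1+\alpha)^{l-j}.$$

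Converting the remaining binomial coefficients to Pochhammer symbols recognizes this expression as $(1+\alpha)^l\,{}_2F_1(-p,p-2l-1;-l;\alpha/(1+\alpha))$. A single application of Pfaff's transformation ${}_2F_1(a,b;c;z)=(1-z)^{-a}\,{}_2F_1(a,c-b;c;z/(z-1))$ with $(a,b,c,z)=(-p,p-2l-1,-l,\alpha/(1+\alpha))$---for which $1-z=1/(1+\alpha)$, $z/(z-1)=-\alpha$, and $c-b=l-p+1$---converts this to $(1+\alpha)^{l-p}\,{}_2F_1(-p,l-p+1;-l;-\alpha) = (1+\alpha)^{l-p}G_p^l(\alpha)$, as claimed. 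The only real difficulty is bookkeeping with signs when passing between binomial and Pochhammer notation; as a self-contained alternative avoiding any appeal to Pfaff, one can equate coefficients of $\alpha^n$ on both sides, a step that---after cancelling a common factor of $\binom{p}{n}$---reduces to the single terminating ${}_2F_1$ identity $\sum_{j=0}^n(-1)^j\binom{n}{j}\binom{2l-p+1}{j}\binom{l}{j}^{-1}=(-1)^n\binom{l-p+n}{n}\binom{l}{n}^{-1}$, which is evaluated directly by Chu--Vandermonde.
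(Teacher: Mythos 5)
Your proof is correct and follows essentially the same route as the paper: both use the general formula plus the Bannai--Ito evaluation for the first equality, then apply the absorption identity $\sum_s\binom ls\binom sj\alpha^s=\binom lj\alpha^j(1+\alpha)^{l-j}$ to reach the identical intermediate expression $\sum_{j=0}^p(-1)^j\binom pj\binom{2l-p+1}{j}\binom lj^{-1}\alpha^j(1+\alpha)^{l-j}$, after which a single terminating identity remains. The paper finishes by substituting $x=-1/\alpha$ and comparing Taylor coefficients at $x=1$, reducing to the Vandermonde convolution $\sum_i\binom{l-i}{l-r}\binom{l-p+i}{l-p}=\binom{2l-p+1}{r}$; your Pfaff transformation (or coefficientwise Chu--Vandermonde) finish is the same identity in hypergeometric packaging, and is legitimate here since $j\le p\le l$ keeps $(-l)_j$ nonzero.
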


\begin{proof}
Let us put $x=-1/\alpha$.
Then we have
\begin{align*}
\sum_{s=0}^l\binom ls Q_p(s;l-1,l-1,l)\alpha^s
&=\sum_{j=0}^p(-1)^j\binom pj\binom{2l-p+1}j\binom lj^{\!\!-1}
\alpha^j(1+\alpha)^{l-j}\\
&=x^{-l}(x-1)^{l-p}\sum_{j=0}^p\binom pj\binom{2l-p+1}j\binom lj^{\!\!-1}(x-1)^{p-j}
\end{align*}
and
\begin{align*}
(1+\alpha)^{l-p}G_p^l(\alpha)
&=x^{-l}(x-1)^{l-p}\sum_{j=0}^p(-1)^j
\binom pj\binom{l-p+j}j\binom lj^{\!\!-1}(-x)^{p-j}.
\end{align*}
Here we use the elementary identity
\begin{equation*}
\sum_{s=0}^l \binom ls\binom sj\alpha^s=\binom lj\alpha^j(1+\alpha)^{l-j}.
\end{equation*}
Hence, to prove the theorem,
it is enough to verify
\begin{equation}
\sum_{i=0}^p\binom pi\binom{l-p+i}i\binom li^{\!\!-1}x^{p-i}
=\sum_{j=0}^p\binom pj\binom{2l-p+1}j\binom lj^{\!\!-1}(x-1)^{p-j}.
\end{equation}
Comparing the coefficients of Taylor expansion
of these polynomials at $x=1$,
we notice that the proof is reduced to the equality
\begin{equation}
%
\sum_{i=0}^r\binom{l-i}{l-r}\binom{l-p+i}{l-p}
=\binom{2l-p+1}r
\end{equation}
for $0\le r\le p$, which is well known (see, e.g. (5.26) in \cite{GKP}).
Thus we have the conclusion.
\end{proof}
Thus we give another proof of the irreducible decomposition \eqref{eq:decomp_of_V2l}.






\bigskip

\noindent
\textsc{Kazufumi KIMOTO}\\
Department of Mathematical Science,
University of the Ryukyus\\
Senbaru, Nishihara, Okinawa 903-0231, Japan\\
\texttt{kimoto@math.u-ryukyu.ac.jp}

\end{document}